\documentclass[12pt,a4paper]{amsart}
\usepackage{amssymb,enumerate,mathtools,cite}
\usepackage{graphicx,epstopdf,wasysym}
\usepackage{caption,subcaption,float}
\usepackage{hyperref} 
\textwidth=455pt
\evensidemargin=8pt
\oddsidemargin=8pt
\marginparsep=8pt
\marginparpush=8pt
\textheight=660pt
\numberwithin{equation}{section}
\newtheorem{theorem}{Theorem}[section]
\newtheorem{lemma}[theorem]{Lemma}

\theoremstyle{remark}
\newtheorem{remark}{Remark}[section]
\newtheorem{definition}{Definition}[section]
\newcommand{\D}{\mathbb{D}}
\newcommand{\C}{\mathbb{C}}

\allowdisplaybreaks

\DeclareMathOperator{\RE}{Re}

 \def \a{\alpha}

 \title[Certain subclasses of Analytic Functions]{Certain subclasses of Analytic Functions with Fixed Second Coefficient}

  \author [S. Kumar]{Sushil Kumar}
 \address {Bharati Vidyapeeth's college of Engineering, Delhi-110063, India}
 \email{sushilkumar16n@gmail.com}

 \author[S. Anand]{Swati Anand}

 \address{Department of Mathematics, University of Delhi,
 Delhi--110 007, India}
 \email{swati\_anand01@yahoo.com}
 \author [N.K. Jain]{Naveen Kumar Jain}
 \address {Department of Mathematics, Aryabhatta College, Delhi-110021,India}
 \email{naveenjain05@gmail.com}

\begin{document}
\subjclass[2010]{30C80, 30C45}

\keywords{Analytic functions with fixed second coefficient;  starlike functions;  parabolic starlike functions; radius estimates.}
\maketitle
\begin{abstract}
This paper aims to pursue some classes of normalized analytic functions $f$ with fixed second coefficient defined on open unit disk, such that ${(1+z)^2f(z)}/{z}$  and ${(1+z)f(z)}/{z}$ are functions having positive real part.
The radius of strongly starlikeness, the radius of lemniscate starlikeness, the radius of parabolic starlikeness and other starlikeness estimates are calculated for such functions.
As well relevant connections of computed radii estimates with the existing one are also shown.
\end{abstract}

\section{Introduction}
Let $\mathcal{A}$ be the class of analytic functions $f$ defined in  $\D= \{z\in \mathbb{C}: \vert z\vert <1\}$ satisfying  $f(0) = f'(0)-1=0$ and $\mathcal{S}$ be the sub class of $\mathcal{A}$ contains univalent. Let $\mathcal{M}$ be a set of functions and ${P}$ be a  property. The supremum $R$ of all the radii  so that each function $f \in \mathcal{M}$ has the property $P$ in the disk $\mathbb{D}_r$, $0\leq r\leq R$
is known as radius of property for the set $\mathcal{M}$, denoted by $R_{P}(\mathcal{M})$. Thus, it is noted  that $R_{S^*}(\mathcal{S})$ is $\tanh\frac{\pi}{4}\approx 0.65579$.
Refer ~\cite{lee1,juneja,kumar21,cho19} for more literature on radius problems.
For a fixed real number $b$ in $[0,1]$, the class $\mathcal{A}_b$ consists  of  analytic functions
\[f(z) = z+bz^2+ \cdots \quad\quad (z \in \D).\]
Since the bound on second coefficient plays a momentous role in the univalent function theory,
the functions having fixed second coefficients  attracted the interest of many authors/readers.
In the beginning, Gronwall~\cite{Gronwall} studied the univalent functions having fixed second coefficient.  In ~\cite{ANV}, authors discussed second order differential subordination for such functions.
In 2013, Lee \emph{et al.} accorded some utilizations
of  subordination for such functions~\cite{lee}. Recently,  the best possible estimates on  initial coefficients of Ma-Minda type univalent functions were determined by Kumar \emph{et al.}  ~\cite{sushil17}.
Growth and distortion  estimates, subordination and radius problems for functions $f \in \mathcal{A}_b$ have been discussed in~\cite{ahuja87, mnr2}.
Using the  Hadamard product `$\ast$' and subordination `$\prec$', Padmanabhan and Parvatham \cite{padman} studied the   functions $f \in \mathcal{A}$ fulfilling subordination criteria  $z(c_\a \ast f)'/(c_\a \ast f) \prec \phi$ where $c_\a(z) = z/(1-z)^\a$, $\a$ is a real number and $\phi$ is a convex function.
 Shanmugan~\cite{TN1} investigated the unified class $S^*_j(\phi)$ that contains  functions $f\in \mathcal{A}$ satisfying $z(f \ast j)'/(f \ast j) \prec \phi$ where $\phi$ is convex and $j$ is a specified function in $\mathcal{A}$. If $j(z) = z/(1-z)$,  the class $S^*_j(\phi)$ yields the class  $S^*(\phi)$~\cite{MM}.
For sample, if $\phi(z) = (1+Az)/(1+Bz)$ fulfilling condition  $-1 \le B <A\le1$, then $S^*(\phi)$  scale down to  $S^*[A,B]$. Recently, Anand \emph{et al.}  \cite{ravi} studied the class $S^*[A,B]$ to obtain various radius problems.
In particular, if $A= 1-2\a$ and $B=-1$ fulfilling condition $0 \le \a <1$, then  $S^*(\a)$ contains starlike functions of order $\a$.
For $0 \le \gamma <1$, the class $S^*_{\gamma}$ contains strongly stalike functions of order $\gamma$ satisfying the inequality
$\left| \mbox{arg} ({zf'(z)}/{f(z)})\right|<({\alpha \pi}/{2})$. The class $S_p:= S^*(\phi_{\mathcal{PAR}}(z))$, where $\phi_{\mathcal{PAR}}(z)=1+({2}/{\pi^2}) \left(\log (({1-\sqrt{z}})/({1+\sqrt{z}}))\right)^2$,  contains parabolic
starlike functions which are associated with parabolic region $\{w\in \C:|w-1|<\RE(w)\}$~\cite{Ronn93}. The class $ S^*_L:=S^*(\sqrt{1+z})$ contains lemniscate starlike functions allied with the lemniscate shaped region  $\{w\in \C:|w^2-1|<1\}$ ~\cite{SS1}.
The classes $S^*_e:= S^*(e^z)$ and $S^*_{RL}:= S^*\left(\sqrt{2}-(\sqrt{2}-1)\sqrt{(1-z)/(1+2(\sqrt{2}-1)z)}\right)$ which are associated with the domains  $\{w\in \C:|\log w|<1\}$ and $\{w\in \C:|w^2-\sqrt{2}w+1|<1\}$ respectively studied in~\cite{mnr,mnr1}. The class $S^*_c:= S^*((3+4z+2z^2)/3)$ was studied in \cite{sharma}.
The class $S^*_{\leftmoon}:=S^*(z+\sqrt{1+z^2})$  consists of lune starlike functions associated with the lune shaped region $\{w \in \C:2 |w|>|w^2-1|\}$~\cite{raina}.
Cho \emph{et al.} \cite{cho} introduced the class $S^*_{sin}$ related to the function $1+\sin z$.
The class $S^*_R:=S^*(\psi(z))$ is associated with the function $\psi(z) = 1+(z(k+z))/(k(k-z))$ where $k = \sqrt{2}+1$ was also made known in ~\cite{kumar}. The class $S^*_{N_e}:= S^*(1+z-(z^3/3))$
is associated with the two cusped kidney shaped region $ \Omega_{N_e}:= \{u+ ib:(u^2-2u+v^2+5/9)^3-4v^2/3<0\}$~\cite{wani}.  Recently, the class $S^*_{SG}:=S^*(2/(1+e^{-z}))$ associated with   $\{w \in \C:|\log( (2/(2-w))-1)|<1\}$ is also studied in~\cite{goel}.
 For the functions $f$ with fixed second coefficient, Ali \emph{et al.} ~\cite{ali1} computed radii of starlikeness. Further, authors~\cite{ali2} examined  radius estimates for the analytic functions $f$  satisfying some conditions on $f/g$ for some $g\in \mathcal{A}$.
In recent past, certain best possible radius estimates for certain classes of functions $f \in \mathcal{A}$ fulfilling inequality $\RE (1+z)f(z)/z >0$  and  $\RE (1+z)^2f(z)/z >0$  have been determined~\cite{AV}. For more details, see~\cite{agh18,pas89,kanaga}.

Motivated by above explained research work, we fix radius estimates for some classes considered in~\cite{AV} having fixed second coefficients to be in the classes $S^*(\a)$, $S_L^* $, $S_p $, $S_e^*$, $S_c^* $, $S_{\sin}^* $, $S_{\leftmoon}^*$, $S_R^*$, $S_{RL}^*$, $S_{\gamma}^*$, $S_{N_e}^*$ and  $S_{SG}^*$. Some connections of obtained radii estimates with the existing ones are also demonstrated.

\section{Radius estimates for the class $\mathcal{G}_b^1$}
Let the class $\mathcal{P}(\a)$ accommodate analytic functions having following series expansion
$p(z) = 1+b_1z+b_2 z^2+ \cdots$ fulfilling  $\RE(p(z)) > \a$ ($0 \le \a <1$). Note that $\mathcal{P}:= \mathcal{P}(0)$. For $p \in \mathcal{P}(\a)$\cite[P.170]{nehari}, we have
$|b_1| \le 2(1-\a).$
Let $P_b(\a)$ be a subclass of $\mathcal{P}(\a)$ consisting of functions $p$ of the form
$p(z) = 1+ 2b(1-\a)z+ \cdots, |b| \le 1$. It is noted that $\mathcal{P}_b:= \mathcal{P}_b(0)$.
Let $f$ be the  function  having  expansion
\begin{equation}\label{eqf}
f(z) = z+ a_2 z^2+ \cdots
\end{equation}
and $\RE ((1+z)^2 f(z)/z) >0$. Then $(1+z)^2 f(z)/z = 1+(2+a_2)z+ \cdots \in \mathcal{P}$ and hence $0\leq a_2 \leq 4$. Now, we consider the   class   consisting of functions   of the form $f(z)=z+4bz^2+\cdots$ ($-1\leq b\leq1$).

\begin{definition}
For $|b| \le 1$, let
\[\mathcal{G}_b^1 := \left\{f \in \mathcal{A}: f(z)=z+4bz^2+\cdots,  \RE \left(\frac{(1+z)^2}{z}f(z) \right)>0, z \in \D\right\}. \]
\end{definition}
We will use following lemma in the proof of main results:
\begin{lemma}\cite[Theorem 2, p. 213]{carty}\label{lem1}
Let $|b| \le 1$,  $0 \le \a <1$ and $p \in \mathcal{P}_b(\a)$. Then
\begin{equation*}
\left|\frac{zp'(z)}{p(z)}\right| \le \frac{2(1-\a)r}{1-r^2} \frac{|b|r^2+2r+|b|}{(1-2\a)r^2+2(1-\a)|b|r+1}\quad \mbox{for} \quad|z|=r <1.
\end{equation*}
\end{lemma}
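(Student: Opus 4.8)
The plan is to reduce the statement to a Schwarz function and then carry out an extremal‑function argument. First observe that $\bigl|\tfrac{\zeta p'(\zeta)}{p(\zeta)}\bigr|$ is unchanged when $p(\zeta)$ is replaced by $p(e^{i\phi}\zeta)$ and $\zeta$ by $e^{-i\phi}\zeta$, and that $p\mapsto p(e^{i\phi}\,\cdot\,)$ carries $\mathcal P_b(\a)$ into $\mathcal P_{be^{i\phi}}(\a)$; since the asserted bound depends on $z$ and $b$ only through $r=|z|$ and $|b|$, it suffices to prove it at one point $z=r\in(0,1)$, with $b=|b|\in[0,1]$. Put $q:=(p-\a)/(1-\a)$; then $q\in\mathcal P$ and, from $p(z)=1+2b(1-\a)z+\cdots$, one gets $q(z)=1+2bz+\cdots$. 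Write $q=(1+\omega)/(1-\omega)$ with $\omega$ the associated Schwarz function ($\omega(0)=0$); comparing linear coefficients gives $\omega'(0)=b$. Substituting $q$ back yields $p(z)=\bigl(1+(1-2\a)\omega(z)\bigr)/\bigl(1-\omega(z)\bigr)$, and a logarithmic differentiation (the cross terms cancel, leaving $2(1-\a)$ in the numerator) gives
\[
\frac{zp'(z)}{p(z)}=\frac{2(1-\a)\,z\omega'(z)}{\bigl(1-\omega(z)\bigr)\bigl(1+(1-2\a)\omega(z)\bigr)}.
\]
Hence the lemma is equivalent to bounding the modulus of the right‑hand side at $z=r$ over all Schwarz functions $\omega$ with $\omega'(0)=b\in[0,1]$.

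The candidate for the extremal configuration is the Blaschke product $B(z)=z(z+b)/(1+bz)$, which has zeros at $0$ and $-b$ and satisfies $B'(0)=b$. A direct computation at $z=r$ gives
\[
1-B(r)=\frac{1-r^{2}}{1+br},\qquad 1+(1-2\a)B(r)=\frac{1+2(1-\a)br+(1-2\a)r^{2}}{1+br},\qquad rB'(r)=\frac{r\bigl(br^{2}+2r+b\bigr)}{(1+br)^{2}},
\]
and substituting these into the identity above reproduces precisely the right‑hand side of Lemma~\ref{lem1}. So the whole matter reduces to showing that $B$ is extremal.

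For that I would use the Schur parametrization of Schwarz functions with prescribed first derivative: since $g(z):=\omega(z)/z$ is a self‑map of $\D$ with $g(0)=b$, one has $g=(b+v)/(1+bv)$ with $v(z)=z\omega_1(z)$ for some self‑map $\omega_1$ of $\D$, whence $|v(z)|\le r$ on $|z|=r$ and
\[
\omega(z)=z\,\frac{b+v(z)}{1+bv(z)},\qquad z\omega'(z)=\omega(z)+\frac{z^{2}(1-b^{2})\,v'(z)}{\bigl(1+bv(z)\bigr)^{2}}.
\]
Fix $z=r$. For a prescribed value $v(r)=v_{0}$ with $|v_{0}|\le r$, the admissible $v'(r)$ fill a closed disk (Schwarz--Pick applied to $\omega_1$), so $z\omega'(r)$ — being affine in $v'(r)$ — also fills a disk whose centre and radius are explicit in $v_{0}$, while the denominator $\bigl(1-\omega(r)\bigr)\bigl(1+(1-2\a)\omega(r)\bigr)$ depends on $v_{0}$ alone. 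Thus the maximum of the functional for fixed $v_{0}$ is obtained by taking $z\omega'(r)$ at the far edge of that disk, and what remains is a maximization over $v_{0}$ in $|v_{0}|\le r$.

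The main obstacle is exactly this last maximization: one must show the supremum is attained at $v_{0}=r$, i.e.\ at $\omega_1\equiv1$ and $\omega=B$. I expect this to follow by optimizing over $\arg v_{0}$ for $|v_{0}|$ fixed and then checking that the resulting one‑variable function is nondecreasing up to $|v_{0}|=r$; the bookkeeping (clearing the several moduli in numerator and denominator) is the heaviest part, though elementary. A cruder route — bounding numerator and denominator separately via $|\omega(z)|\le r(r+b)/(1+br)$ — is bound to fail, since at the extremal $B$ the factor $1+(1-2\a)\omega$ is \emph{not} minimized (indeed $|z\omega'(r)|$ alone is not maximal at $\omega=B$ either); the estimate genuinely requires controlling the pair $\bigl(\omega(z),\omega'(z)\bigr)$ jointly, which is what the Schur parametrization supplies. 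Once the monotonicity is in hand, the lemma follows, and the function built from $\omega=B$ shows it is sharp.
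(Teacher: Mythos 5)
The paper never proves this lemma: it is quoted verbatim from McCarty \cite[Theorem 2]{carty}, so there is no internal argument to compare yours with, and your attempt must stand on its own. As written it does not. Your reductions and identities are correct — $p=(1+(1-2\alpha)\omega)/(1-\omega)$ with $\omega$ a Schwarz function satisfying $\omega'(0)=b$, the formula $zp'/p=2(1-\alpha)z\omega'/\bigl((1-\omega)(1+(1-2\alpha)\omega)\bigr)$, and the computation showing that $B(z)=z(z+b)/(1+bz)$ reproduces exactly the right-hand side at $z=r$; that settles sharpness. But the inequality itself is never established. After parametrizing $\omega(z)=z\bigl(b+v(z)\bigr)/\bigl(1+bv(z)\bigr)$ and invoking Schwarz--Pick (Dieudonn\'e) to describe the disk of admissible $v'(r)$ for a fixed value $v_0=v(r)$, the decisive step — that the resulting maximum over all $v_0$ with $|v_0|\le r$ (both modulus and argument) is attained at $v_0=r$, i.e.\ at $\omega=B$ — is only announced as an expectation (``I expect this to follow'', ``the bookkeeping is the heaviest part''). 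That two-parameter maximization \emph{is} the content of McCarty's theorem; without it you have shown only that the claimed bound is attained by some admissible function, not that it is an upper bound. So the proposal is a plan plus a sharpness check, not a proof.

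There is also a secondary flaw in the opening normalization. The rotation $p\mapsto p(e^{i\phi}\,\cdot\,)$ carries $\mathcal P_b(\alpha)$ to $\mathcal P_{be^{i\phi}}(\alpha)$, so you may normalize either the argument of $b$ or the location of the point $z$ on $|z|=r$, but not both at once (conjugation only adds a reflection). Hence the extremal problem you actually set up — real $b=|b|\ge 0$ together with $z=r>0$ — is a proper subfamily of what the lemma requires. To complete the argument you must either keep the evaluation point $z=re^{i\theta}$ general, or keep the first Schur parameter $c=\omega'(0)$ complex with $|c|=|b|$ (so $g=(c+v)/(1+\bar c v)$ and $z\omega'=\omega+z^{2}(1-|c|^{2})v'/(1+\bar c v)^{2}$), and then show that the supremum over this extra angular parameter also occurs at the symmetric configuration. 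Both defects are repairable in principle, but the central estimate — the heart of the cited theorem — is missing.
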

In the following result we determind the radius constants for the class $\mathcal{G}_b^1$.

\begin{theorem}\label{ThmF4}
Let $b_1=|1+2b|\leq 1$ and  $\mathcal{G}_b^1 $ be a class of analytic functions. Then the following radius estimates are obtained:
\begin{enumerate}
\item  $R_{S^*(\a)}(\mathcal{G}_b^1)=\rho_1$, where $\rho_1$  is the smallest root in $(0,1)$ of the equation
\begin{equation}\label{eqa4}
-(1+\a )r^4+2(1-\a b_1)r^3  +2(1+2b_1)r^2+2(1+\a b_1)r+\a-1=0.
\end{equation}
The result is sharp.
\item $R_{S_L^*}(\mathcal{G}_b^1)=\rho_2$, where $\rho_2$ is the smallest root in $(0,1)$ of the equation
\begin{align}\label{eql4}
(1+\sqrt{2})r^4&+2(1+\sqrt{2}(1+\sqrt{2})b_1)r^3+2(3+2b_1)r^2 \nonumber\\
& +2(1+\sqrt{2}(\sqrt{2}-1)b_1)r+1-\sqrt{2}=0.
\end{align}
The result is sharp.
\item  $R_{S_p}(\mathcal{G}_b^1)=\rho_3$, where $\rho_3$ is the smallest root in $(0,1)$ of the equation
\begin{equation}\label{eqp4}
-3r^4+2(2-b_1)r^3+4(1+2b_1)r^2+2(2+b_1)r-1=0.
\end{equation}
The result is sharp.
\item $R_{S_e^*}(\mathcal{G}_b^1)=\rho_4$, where $\rho_4$ is the smallest root in $(0,1)$ of the equation
\begin{equation} \label{eqe4}
-(1+e)r^4+2(e-b_1)r^3 +2e(1+2b_1)r^2+2(e+b_1)r+1-e=0.
\end{equation}
The result is sharp.
\item $R_{S_c^*}(\mathcal{G}_b^1)=\rho_5$, where $\rho_5$ is the smallest root in $(0,1)$ of the equation
\begin{equation} \label{eqc4}
-4r^4 +2(3-b_1)r^3+6(1+2b_1)r^2+2(3+b_1)r-2=0.
\end{equation}
The estimate is best possible.
\item $R_{S_{\sin}^*}(\mathcal{G}_b^1)=\rho_6$, where $\rho_6$ is the smallest root in $(0,1)$ of the equation
\begin{align}\label{eqs4}
(\sin 1+2)r^4 +&2(1+(3+\sin 1)b_1)r^3+2(3+2b_1)r^2 \nonumber\\
&+2(1+(1-\sin 1)b_1)r-\sin 1=0.
\end{align}
The result is sharp.
\item $R_{S_{\leftmoon}^*}(\mathcal{G}_b^1)=\rho_7$, where $\rho_7$ is the smallest root in $(0,1)$ of the equation
\begin{equation}\label{eqm4}
 -\sqrt{2}r^4 +2(1+(1-\sqrt{2})b_1)r^3 +2(1+2b_1)r^2+2(1+(\sqrt{2}-1)b_1)r+\sqrt{2}(1-\sqrt{2})=0.
\end{equation}
\item $R_{S_R^* }(\mathcal{G}_b^1)=\rho_8$, where $\rho_8$ is the smallest root in $(0,1)$ of the equation
\begin{align}\label{eqr4}
(1-2\sqrt{2})r^4&+2(1+2(1-\sqrt{2})b_1)r^3+2(1+2 b_1)r^2\nonumber\\
&+ 2(1+2(\sqrt{2}-1)b_1)r+(2\sqrt{2}-3)=0.
\end{align}
The result is sharp.
\item $R_{S_{RL}^*}(\mathcal{G}_b^1)=\rho_9$, where $\rho_9$ is the smallest root in $(0,1)$ of the equation
\begin{align}\label{eqrl4}
&(2(1+b_1)r^3+4(1+b_1)r^2+2(1+b_1)r )^2
-(r^2+2b_1r+1)^2((1-r^2)\notag\\&\sqrt{-r^4-2  (\sqrt{2}-1 ) r^2+2  (\sqrt{2}-1 )}
+r^4+2 \left(\sqrt{2}-1\right) r^2-2 \sqrt{2}+2=0.
\end{align}
\item $R_{S_{\gamma}^*}(\mathcal{G}_b^1)=\rho_{10}$, where $\rho_{10}$ is the smallest root in $(0,1)$ of the equation
\begin{align*}
 &-\sin(\pi \gamma/2)  r ^4 +2(1+(1-\sin(\pi \gamma/2))b_1) r ^3 \\ &+2(2(1+b_1)-\sin(\pi \gamma/2)) r ^2 +2(1+(1-\sin(\pi \gamma/2))b_1)r -\sin(\pi \gamma/2)= 0.
\end{align*}

\item $R_{S_{N_e}^*}(\mathcal{G}_b^1)=\rho_{11}$, where $\rho_{11}$ is the smallest root in $(0,1)$ of the equation
\begin{equation}\label{eqn4}
 8r^4+2(3+b_1)r^3+6(2+b_1)r^2+2(3+b_1)r-2=0.
\end{equation}
The result is sharp.
\item $R_{S_{SG}^*}(\mathcal{G}_b^1)=\rho_{12}$, where $\rho_{12}$ is the smallest root in $(0,1)$ of the equation
\begin{equation}\label{eqsg4}
 (1+3e)r^4+2(1+e+2(1+2e)b_1)r^3+2(1+e)(3+2b_1)r^2+2(1+e+2b_1)r+1-e=0.
\end{equation}
The result is sharp.
\end{enumerate}
\end{theorem}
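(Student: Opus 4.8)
The plan is to run the usual strategy for these radius problems: replace $f$ by an associated Carath\'eodory function, confine $zf'(z)/f(z)$ to a single disc, and then ask, class by class, when that disc lies inside the relevant Ma--Minda domain.

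First I would normalize. For $f\in\mathcal G_b^1$ set $p(z)=(1+z)^2f(z)/z$; expanding the product gives $p(z)=1+2(1+2b)z+\cdots$ with $\RE p(z)>0$, so $p\in\mathcal P$, and since $b_1=|1+2b|\le1$ in fact $p\in\mathcal P_{b_1}$ (the sign of the second coefficient is irrelevant to Lemma~\ref{lem1}). Since $f(z)=zp(z)/(1+z)^2$, logarithmic differentiation yields the identity
\[
\frac{zf'(z)}{f(z)}=\frac{1-z}{1+z}+\frac{zp'(z)}{p(z)}.
\]
The M\"obius map $z\mapsto(1-z)/(1+z)$ carries $\{|z|\le r\}$ onto the closed disc centred at $(1+r^2)/(1-r^2)$ of radius $2r/(1-r^2)$, while Lemma~\ref{lem1} with $\a=0$ bounds $|zp'(z)/p(z)|$; adding the two estimates and simplifying --- the numerator collapses thanks to the factor $(1+b_1)(1+r)^2$ --- gives the master inequality
\[
\left|\frac{zf'(z)}{f(z)}-\frac{1+r^2}{1-r^2}\right|\le R(r):=\frac{2r(1+b_1)(1+r)}{(1-r)(r^2+2b_1r+1)}\qquad(|z|\le r).
\]

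Next I would read off each radius from the geometry of the target domain. For $S^*(\a)$ the disc lies in $\{\RE w>\a\}$ precisely when its leftmost point $(1+r^2)/(1-r^2)-R(r)$ is $\ge\a$, and the equality case, cleared of denominators, is exactly \eqref{eqa4}, whose smallest root in $(0,1)$ is $\rho_1$. Every other part is the same, with the half-plane replaced by the appropriate Ma--Minda region and the elementary half-plane condition replaced by the known formula for the radius of the largest disc centred at a positive real point inside that region: for the parabola $|w-1|<\RE w$ the condition is again on the leftmost point ($\ge 1/2$); for the lemniscate $|w^2-1|<1$ it is on the rightmost point ($\le\sqrt2$); for $|\log w|<1$ and $S^*_c$ on the leftmost point ($\ge e^{-1}$, resp. $\ge 1/3$); for $S^*_{\sin}$, $S^*_{N_e}$, $S^*_{SG}$ it is essentially a bound on the rightmost point; for $S^*_R$ on the leftmost point ($\ge 2\sqrt2-2$); for the sector $|\arg w|<\pi\gamma/2$ it is $R(r)\le\sin(\pi\gamma/2)(1+r^2)/(1-r^2)$; and for the lune and $S^*_{RL}$ one invokes the corresponding lemmas from the cited papers. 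In each case substituting $a=(1+r^2)/(1-r^2)$, $R=R(r)$ into the equality and clearing denominators reproduces the displayed equation (the irrational one \eqref{eqrl4} for $S^*_{RL}$, whose boundary point solves a quadratic), and the radius is the smallest root in $(0,1)$; one must also check this root is small enough that $a=(1+\rho_i^2)/(1-\rho_i^2)$ lies in the range where the disc lemma used is valid.

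For sharpness I would exhibit an extremal. When $1+2b\le0$ the function
\[
f_0(z)=\frac{z(1-z)}{(1+z)(1+2b_1z+z^2)},\qquad \frac{(1+z)^2f_0(z)}{z}=\frac{1-z^2}{1+2b_1z+z^2},
\]
belongs to $\mathcal G_b^1$ and turns Lemma~\ref{lem1} into an equality at $z=r$; a companion function handles $1+2b>0$. For every class whose binding boundary point lies on the real axis --- precisely those for which a sharpness claim is made --- one checks that $zf_0'(z)/f_0(z)$ reaches the boundary of the model region at $z=\pm\rho_i$, so no smaller radius works; for $S^*_\gamma$, $S^*_{\leftmoon}$ and $S^*_{RL}$ this last step is unavailable (for the sector and $S^*_{RL}$ the critical boundary point is off the real axis), which is why those parts carry no sharpness claim. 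The conceptual content here is short; the real obstacle is twofold --- first, pinning down the correct sharp disc-in-domain inequalities for the non-convex and cuspidal domains ($\Omega_{N_e}$, the lune, $|w^2-\sqrt2w+1|<1$) rather than the naive real-axis condition, and second, carrying out the long (if mechanical) algebra that verifies $a=(1+r^2)/(1-r^2)$, $R=R(r)$ really do collapse each containment condition to the stated equation, together with the bookkeeping of the extremal across the two signs of $1+2b$.
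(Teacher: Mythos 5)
Your reduction half is exactly the paper's argument: the same substitution $h(z)=(1+z)^2f(z)/z\in\mathcal P_{b_1}$, the identity $zf'/f=zh'/h+(1-z)/(1+z)$, Lemma~\ref{lem1} with $\a=0$, and the resulting disc $\bigl|zf'/f-(1+r^2)/(1-r^2)\bigr|\le R(r)$ (your factored $R(r)$ is just the simplified form of \eqref{eqdisk6}), followed by the cited disc-in-domain lemmas; that part is fine, modulo the range-of-validity checks you already flag and the algebraic verification you defer (be warned that for part (11) the ``rightmost point $\le 5/3$'' condition does not reproduce \eqref{eqn4} -- the paper itself is internally inconsistent there, its proof deriving yet a third polynomial).

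The genuine gap is in your sharpness plan. A single extremal $f_0$ with $(1+z)^2f_0(z)/z=(1-w)/(1+w)$, evaluated at $z=\pm\rho$, does \emph{not} certify sharpness for the classes whose binding contact is the right end of the disc ($S_L^*$, $S_{\sin}^*$, $S_{N_e}^*$, $S_{SG}^*$). At $z=-\rho$ the M\"obius term is indeed maximal, $(1+\rho)/(1-\rho)$, but for your $f_0$ the term $zh'(z)/h(z)$ at $z=-\rho$ equals $2\rho\bigl(b_1(1+\rho^2)-2\rho\bigr)/\bigl((1-\rho^2)(1+2b_1\rho+\rho^2)\bigr)$, which is negative for $b_1<2\rho/(1+\rho^2)$ (for instance $b=-1/2$, $b_1=0$ gives $-4\rho^2/(1-\rho^4)$), so $zf_0'/f_0$ falls strictly short of the extreme value $(1+\rho^2)/(1-\rho^2)+R(\rho)$ and never touches the boundary of, say, the lemniscate at $r=\rho_2$. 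This is why the paper uses a \emph{second} extremal, $f_2(z)=z\bigl(1+2(1+2b)z+z^2\bigr)/\bigl((1+z)^2(1-z^2)\bigr)$ with $(1+z)^2f_2(z)/z=(1+w_2)/(1-w_2)$, evaluated at $z=-\rho$, for exactly these right-end classes (and $f_1$ at $z=+\rho$ for the left-end ones). Your ``companion function'' only repairs the sign of $1+2b$, not the side of the disc, so as written the sharpness step would fail for parts (2), (6), (11) and (12); you need the reciprocal-type extremal as well.
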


\begin{proof}
Let  $f \in \mathcal{G}_b^1$  such that
\begin{equation} \label{eqfg3}
 \RE \left(\frac{(1+z)^2}{z}f(z) \right)>0 \,\,\mbox{for all}\,\, z \in \D.
\end{equation}
The function $h$ defined on $\D$ as
\begin{equation} \label{eqh(z)3}
h(z) = \frac{(1+z)^2}{z}f(z) =  1+2(1+2b)z+ \cdots .
\end{equation}
In view of (\ref{eqfg3}) and (\ref{eqh(z)3}), we observe that the function $h \in \mathcal{P}_{(1+2b)}$ and $f(z) = zh(z)/(1+z)^2$. Then
\begin{equation}\label{eqf(z)3}
\frac{zf'(z)}{f(z)} = \frac{zh'(z)}{h(z)}+\frac{1-z}{1+z}.
\end{equation}
The transformation $\dfrac{1-z}{1+z}$ transform  $|z| \le r$ onto
\begin{equation} \label{eqtr1}
\left|\frac{1-z}{1+z}-\frac{1+r^2}{1-r^2}\right|\le \frac{2r}{1-r^2}.
\end{equation}
Using Lemma \ref{lem1}, we have
\begin{equation} \label{eqt4}
\left|\frac{zh'(z)}{h(z)}\right| \le \frac{2r}{1-r^2} \left(\frac{b_1r^2+2r+b_1}{r^2+2b_1r+1}\right).
\end{equation}
Using (\ref{eqtr1}) and (\ref{eqt4}),  from (\ref{eqf(z)3}), we get
\begin{align}\label{eqdisk6}
\left|\frac{zf'(z)}{f(z)}- \frac{1+r^2}{1-r^2}\right| & \le \frac{2r}{1-r^2}\left(\frac{b_1r^2+2r+b_1}{r^2+2b_1r+1}+1\right)\notag \\
&= \frac{2((1+b_1)r^3+2(1+b_1)r^2+(1+b_1)r)}{(1-r^2)(r^2+2b_1r+1)}.
\end{align}
It follows by the above inequality that
\[\RE\left(\frac{zf'(z)}{f(z)}\right) \ge\frac{r^4-2r^3-2(1+2b_1)r^2-2r+1}{-r^4-2b_1r^3+2b_1r+1}  \ge 0\]
whenever
\[-r^4+2r^3+2(1+2b_1)r^2+2r-1 \le 0.\]

\begin{enumerate}
\item In view of (\ref{eqdisk6}), we have
\[\RE\left(\frac{zf'(z)}{f(z)}\right) \ge\frac{r^4-2r^3-2(1+2b_1)r^2-2r+1}{-r^4-2b_1r^3+2b_1r+1}  \ge \a\]
whenever
\begin{equation*}
-(1+\a )r^4+2(1-\a b_1)r^3  +2(1+2b_1)r^2+2(1+\a b_1)r+\a-1\le 0.
\end{equation*}
Thus, for the class $\mathcal{G}_b^1$, the $S^*(\a)$ -radius is the  root $\rho_1 \in (0,1)$ of (\ref{eqa4}).
The function $f_1$ is defined by
\begin{equation} \label{ext4}
f_1(z) = \frac{z-z^2}{(1+z)(1-2(1+2b)z+z^2)}
\end{equation}
so that
\[ \frac{(1+z)^2}{z}f_1(z) = \frac{1-w_1(z)}{1+w_1(z)}\]
where
\[w_1(z) = \dfrac{z(z-(1+2b))}{(1-(1+2b)z)}\quad \mbox{ with}\quad |2b+1| \le 1\]
is an analytic function fulfilling all the cases of Schwarz lemma in $\D$ and thus $\RE ((1+z)^2f_1(z)/z) >0$ in $\D$. Hence  $f_1\in \mathcal{G}_b^1$.
At $z=r = \rho_1$,  we have
\begin{align*}
\frac{zf_1'(z)}{f_1(z)}
&=\frac{r^4-2r^3+(2+8b)r^2-2r+1}{(1-r^2)(1-2(1+2b)r+r^2)} \\
&= \a
\end{align*}
which follows the sharpness.

\item It follows from (\ref{eqdisk6}) that
\begin{align*}
\left|\frac{zf'(z)}{f(z)}- 1\right| &\le \left|\frac{zf'(z)}{f(z)}- \frac{1+r^2}{1-r^2}\right|+\frac{2r^2}{1-r^2} \notag\\
 &\le  \frac{2r^4+2(1+3b_1)r^3+2(3+2b_1)r^2+2(1+b_1)r}{-r^4-2b_1r^3+2b_1r+1}.
 \end{align*}
By \cite[Lemma 2.2, p. 6559]{ali}, the function $f \in \mathcal{G}_b^1$ go to the class $S^*_L$, whenever the following inequality holds
\[\frac{2r^4+2(1+3b_1)r^3+2(3+2b_1)r^2+2(1+b_1)r}{-r^4-2b_1r^3+2b_1r+1} \le \sqrt{2}-1\]
or
\begin{align*}
(1+\sqrt{2})r^4&+2(1+\sqrt{2}(1+\sqrt{2})b_1)r^3+\\
&2(3+2b_1)r^2 +2(1+\sqrt{2}(\sqrt{2}-1)b_1)r+1-\sqrt{2} \le 0.
\end{align*}
Thus, the $S_L^*-$ radius  is the smallest root $\rho_2 \in (0,1)$ of (\ref{eql4}).
The function $f_2:\D \to  \C $ is defined by
\begin{equation} \label{ext5}
 f_2(z) = \frac{z(1+ 2(1+2b)z+z^2)}{(1+z)^2(1-z^2)}.
\end{equation}
so that
\[\frac{(1+z)^2}{z}f_2(z) = \frac{1+w_2(z)}{1-w_2(z)}\]
where \[w_2(z) = \dfrac{z(z+(1+2b))}{(1+(1+2b)z)}\quad \mbox{with}\,\, |1+2b| \le 1\]
is an analytic function fulfilling all the cases of Schwarz lemma in the unit disk $\D$. Thus, $\RE ((1+z)^2f_2(z)/z) >0$ in $\D$ and hence   $f_2 \in \mathcal{G}_b^1$.
The number $\rho = \rho_2$ satisfies
\[\frac{\rho^4-(2+8b)\rho^3+(2-8b)\rho^2-(2+8b)\rho+1}{(1-\rho^2)(1-2(1+2b)\rho+\rho^2)}= \sqrt{2}.\]
Thus, at $z =-\rho = -\rho_2$,  we have
\begin{align*}
\left|\left(\frac{zf_2'(z)}{f_2(z)}\right)^2-1\right| 
&=\left|\left(\frac{\rho^4-(2+8b)\rho^3+(2-8b)\rho^2-(2+8b)\rho+1}{(1-\rho^2)(1-2(1+2b)\rho+\rho^2)}\right)^2-1\right|\\
&= 1.
\end{align*}
This shows the sharpness.

\item Using \cite[Section 3˘, p. 321]{TN},  the disk (\ref{eqdisk6}) lies inside the parabolic region $\Omega_p= \{w:\RE w >{w-1}\}$ provided that
\[\frac{2((1+b_1)r^3+2(1+b_1)r^2+(1+b_1)r)}{(1-r^2)(r^2+2b_1r+1)}
\le \frac{1+r^2}{1-r^2}-\frac{1}{2}\]
which on simplification becomes
\begin{equation*}
-3r^4+2(2-b_1)r^3+4(1+2b_1)r^2+2(2+b_1)r-1 \le 0.
\end{equation*}
Thus, the $S_p$- radius  is the smallest   root $\rho_3 \in (0,1)$ of (\ref{eqp4}).
For the function $f_1$ given in (\ref{ext4}) at $z= \rho = \rho_3$, the following calculations shows the desired sharpness
\begin{align*}
\RE \frac{zf_1'(z)}{f_1(z)} &= \frac{\rho^4-2\rho^3+(2+8b)\rho^2-2\rho+1}{(1-\rho^2)(1-2(1+2b)\rho+\rho^2)}\\
&=\frac{-2\rho^4+(4+4b)\rho^3-(2+8b)\rho^2-4b\rho}{(1-\rho^2)(1-2(1+2b)\rho+\rho^2)}\\
&=\left|\frac{2\rho^4-(4+4b)\rho^3+(2+8b)\rho^2+4b\rho}{(1-\rho^2)(1-2(1+2b)\rho+\rho^2)}\right|\\
&= \left|\frac{zf_1'(z)}{f_1(z)}-1\right|.
\end{align*}

\item By \cite[Lemma 2.2, p.\ 368]{mnr}, the function $f \in \mathcal{G}_b^1$ belongs to the class $ S_e^*$ if
\[\frac{2((1+b_1)r^3+2(1+b_1)r^2+(1+b_1)r)}{(1-r^2)(r^2+2b_1r+1)}
\le \frac{1+r^2}{1-r^2}-\frac{1}{e}\]
equivalently
\begin{equation*}
-(1+e)r^4+2(e-b_1)r^3 +2e(1+2b_1)r^2+2(e+b_1)r+1-e \le 0.
\end{equation*}
Thus, the $S_e^*$- radius   is the smallest   root $\rho_4 \in (0,1)$ of (\ref{eqe4}).
For $z=\rho=\rho_4$ and the function $f_1$, the following calculation prove the sharpness
\begin{align*}
\left|\log \frac{zf_1'(z)}{f_1(z)}\right|  &= \left|\log \left(\frac{\rho^4-2\rho^3-2(3+4b)\rho^2-2\rho+1}{(1-\rho^2)(1+2(1+2b)\rho+\rho^2)}\right)\right|=1.
\end{align*}

\item By making use of \cite[Lemma 2.5, p.926]{sharma},  the function $f \in \mathcal{G}_b^1$ belongs to the class $ S_c^*$ if
\[\frac{2((1+b_1)r^3+2(1+b_1)r^2+(1+b_1)r)}{(1-r^2)(r^2+2b_1r+1)}
\le \frac{1+r^2}{1-r^2}-\frac{1}{3}\]
or
\begin{equation*}
-4r^4 +2(3-b_1)r^3+6(1+2b_1)r^2+2(3+b_1)r-2\le 0.
\end{equation*}
Thus, the $S_c^*$- radius  is the smallest  root $\rho_5 \in (0,1)$ of (\ref{eqc4}).
At $z=\rho=\rho_5$, we have
\begin{align*}
\left|\ \frac{zf_1'(z)}{f_1(z)}\right|  &= \left| \frac{\rho^4-2\rho^3+(2+8b)\rho^2-2\rho+1}{(1-\rho^2)(1-2(1+2b)\rho+\rho^2)}\right|\\
&=\frac{1}{3}\,\, \text{which lies in the boundary of}\,\,  \Omega_c
\end{align*}
 where $\Omega_C $ is  cardioid shaped region $\{x+iy:81 x^4-324 x^3+162 x^2 y^2+270 x^2-324 x y^2-84 x+81 y^4-54 y^2+9=0\}$.

\item Using \cite[Lemma 3.3, p.219]{cho}, the function $f \in \mathcal{G}_b^1$ belongs to the class $S_{\sin}^*$ if
\[\frac{2((1+b_1)r^3+2(1+b_1)r^2+(1+b_1)r)}{(1-r^2)(r^2+2b_1r+1)}\le \sin 1- \frac{2r^2}{1-r^2}\]
 equivalently
\begin{equation*}
(2+\sin 1)r^4 +2(1+(3+\sin 1)b_1)r^3+2(3+2b_1)r^2 +2(1+(1-\sin 1)b_1)r-\sin 1\le0.
\end{equation*}
Thus, the $S_{\sin}^* $- radius  is the smallest  root $\rho_6 \in (0,1)$ of (\ref{eqs4}).
For the function $f_2$ defined in (\ref{ext5}) and $z=-\rho=-\rho_6$, we have
\begin{align*}
\left|\frac{zf_2'(z)}{f_2(z)}\right| &= \left|\frac{\rho^4-(2+8b)\rho^3+(2-8b)\rho^2-(2+8b)\rho+1}{(1-\rho^2)(1-2(1+2b)\rho+\rho^2)}\right|\\
& = 1+\sin1\,\, \text{which lies in the boundary of}\,\, \Omega_S
\end{align*}
where $\Omega_S:= q_0(\D)$ is the image of  $\D$ under the mapping $q_0(z)= 1+\sin z$.

\item By \cite[Lemma 2.1, p. 3]{gandhi1}, the function $f \in \mathcal{G}_b^1$ belongs to the class $S_{\leftmoon}^*$ if the following inequality holds
\[\frac{2((1+b_1)r^3+2(1+b_1)r^2+(1+b_1)r)}{(1-r^2)(r^2+2b_1r+1)}\le 1-\sqrt{2}+\frac{1+r^2}{1-r^2}\]
which becomes
\begin{equation*}
  -\sqrt{2}r^4 +2(1+(1-\sqrt{2})b_1)r^3 +2(1+2b_1)r^2+2(1+(\sqrt{2}-1)b_1)r+(\sqrt{2}-2)\le 0.
\end{equation*}
Thus, the $S_{\leftmoon}^*$- radius  is the smallest root $\rho_7 \in (0,1)$ of (\ref{eqm4}).
At $z=\rho=\rho_7$, we have
\begin{align*}
\left|\left(\frac{zf_1'(z)}{f_1(z)}\right)^2-1\right|  &= \left| \left(\frac{\rho^4-2\rho^3+(2+8b)\rho^2-2\rho+1}{(1-\rho^2)(1-2(1+2b)\rho+\rho^2)}\right)^2-1\right|\\
&=2 \left|\frac{zf_1'(z)}{f_1(z)}\right|.
\end{align*}

\item By \cite[Lemma 2.2 p. 202]{kumar}, the function $f \in \mathcal{G}_b^1$ belongs to the class $ S_R^*$ if
\[\frac{2((1+b_1)r^3+2(1+b_1)r^2+(1+b_1)r)}{(1-r^2)(r^2+2b_1r+1)}\le \frac{1+r^2}{1-r^2}+ 2-2 \sqrt{2}\]
equivalently
\begin{align*}
(1-2\sqrt{2})r^4+&2(1+2(1-\sqrt{2})b_1)r^3+\\
&2(1+2 b_1)r^2+2(1+2(\sqrt{2}-1)b_1)r+2\sqrt{2}-3 \le 0.
\end{align*}
Thus, the $S_R^*$- radius  is the smallest  root $\rho_8 \in (0,1)$ of (\ref{eqr4}).
As for $z=\rho=\rho_8$, we have
  \begin{align*}
  \left|\frac{zf_1'(z)}{f_1(z)}\right|  = \left|\frac{\rho^4-2\rho^3+(2+8b)\rho^2-2\rho+1}{(1-\rho^2)(1-2(1+2b)\rho+\rho^2)}\right|=2(\sqrt{2}-1) =\psi(-1)
 \end{align*}
 where $\psi(\D) = 1+ \dfrac{z}{k}\left(\dfrac{k+z}{k-z}\right) = 1+\dfrac{1}{k}z+ \dfrac{2}{k^2}z^2+ \cdots$, $k= \sqrt{2}+1$.

\item By \cite[Lemma 3.2, p. 10]{mnr1}, the function $f \in \mathcal{G}_b^1$ belongs to the class $ S_{RL}^*$ if
\begin{align*}
&\frac{2((1+b_1)r^3+2(1+b_1)r^2+(1+b_1)r)}{(1-r^2)(r^2+2b_1r+1)}\\
&\le ((1-(\sqrt{2}-\frac{1}{1-r^2})^2)^{1/2}-(1-(\sqrt{2}-\frac{1}{1-r^2})^2))^{1/2}
\end{align*}
which yields
\begin{align*}
&(2(1+b_1)r^3+4(1+b_1)r^2+2(1+b_1)r )^2
-(r^2+2b_1r+1)^2((1-r^2)\notag\\& \sqrt{-r^4-2  (\sqrt{2}-1 ) r^2+2  (\sqrt{2}-1 )}
+r^4+2 \left(\sqrt{2}-1\right) r^2-2 \sqrt{2}+2 \le0.
\end{align*}
Thus, the $S_{RL}^*$- radius  is the smallest  root $\rho_9 \in (0,1)$ of (\ref{eqrl4}).

\item By \cite[Lemma 3.1, p. 307]{grs},  the function $f \in \mathcal{G}_b^1$ belongs to the class $ S_{\gamma}^*$ if
\[\frac{2((1+b_1)r^3+2(1+b_1)r^2+(1+b_1)r)}{(1-r^2)(r^2+2b_1r+1)}\le \frac{1+r^2}{1-r^2}\sin(\frac{\pi\gamma}{2})\]
which  gives
\begin{align*}
&-\sin(\pi \gamma/2) r^4 +2(1+(1-\sin(\pi \gamma/2))b_1)r^3 \\ &+2(2(1+b_1)-\sin(\pi \gamma/2))r^2 +2(1+(1-\sin(\pi \gamma/2))b_1)r-\sin(\pi \gamma/2)\le 0.
\end{align*}

\item Using \cite[Lemma 2.2, p. 86]{wani}, the function $f \in \mathcal{G}_b^1$ belongs to the class $ S^*_{N_e}$ if
\[\frac{2((1+b_1)r^3+2(1+b_1)r^2+(1+b_1)r)}{(1-r^2)(r^2+2b_1r+1)}
\le \frac{5}{3}-\frac{1}{1-r^2}\]
equivalently
\begin{equation*}
5r^4+(6+16 b_1)r^3+(15+12 b_1)r^2+2(3+b_1)r-2 \le 0.
\end{equation*}
Thus, the $S^*_{N_e}$- radius  is the smallest  root $\rho_{11} \in (0,1)$ of (\ref{eqn4}).
Since $f_2$ is defined in (\ref{ext5}), then  at $z=-\rho=-\rho_{11}$, we have
\begin{align*}
  \left|\frac{zf_2'(z)}{f_2(z)}\right| &= \left|\frac{\rho^4-(2+8b)\rho^3+(2-8b)\rho^2-(2+8b)\rho+1}{(1-\rho^2)(1-2(1+2b)\rho+\rho^2)}\right| = \frac{5}{3},
  \end{align*}
  which shows that the estimate is best possible.

\item Using \cite[Lemma 2.2, p. 961]{goel}, the function $f \in \mathcal{G}_b^1$ belongs to the class $S^*_{SG}$ if the following inequality holds
\[\frac{2((1+b_1)r^3+2(1+b_1)r^2+(1+b_1)r)}{(1-r^2)(r^2+2b_1r+1)}
\le \frac{2e}{1+e}-\frac{1}{1-r^2}\]
equivalently if the following inequality holds
\begin{align*}
1-e+r(4 b_1+2 e+2)&+r^2 \left(4 b_1 e+4 b_1+6 e+6\right)+\\
&r^3 \left(8 b_1 e+4 b_1+2 e+2\right)+(3 e+1) r^4\le 0.
\end{align*}
Thus, the $S^*_{SG} $- radius is the smallest root $\rho_{12} \in (0,1)$ of (\ref{eqsg4}).
 For the function $f_2$ defined in (\ref{ext5}), let $w=zf_2'(z)/f_2(z)$. At $z=-\rho=-\rho_{12}$, we have following steps for sharpness
  \begin{align*}
  \left|\log\frac{w}{2-w}\right|
   &= \left|\log \left(\frac{-\rho^4+(2+8b)\rho^3-(2-8b)\rho^2+(2+8b)\rho-1}{3 \rho^4-2(3+8b)\rho^3+(2-8b)\rho^2+2 \rho-1}\right)\right| = 1.
  \end{align*}
\qedhere
\end{enumerate}
\end{proof}
\begin{remark}
On taking $b=-1$,  parts(1)-(10) in Theorem \ref{ThmF4} scale down to \cite[Theorem 2.8]{AV} .
\end{remark}

\section{Radius estimates for the class $\mathcal{G}_b^2$}
Let the function $f$ be given by (\ref{eqf}) such that $\RE ((1+(1/z)) f(z)) >0$. Then $(1+(1/z)) f(z) = 1+(1+a_2)z+ \cdots \in \mathcal{P}$ and hence $-3\leq a_2 \leq 1$. We consider the class of function $f$ with fixed second coefficient whose  expansion is given by
$f(z) = z + 3b z^2 + \cdots$ ($|b| \le 1)$ .
\begin{definition}
For $|b| \le 1$, let $\mathcal{G}_b^2$ be a class of analytic functions given as
\[\mathcal{G}_b^2 := \left\{f \in \mathcal{A}: f(z) = z + 3b z^2 + \cdots, \RE \left(\left(1+\frac{1}{z}\right)f(z) \right)>0, z \in \D\right\} .\]
\end{definition}
Consider the function $f_3$ on $\D$ which is defined by
\begin{equation} \label{ext3}
f_3(z) = \frac{z(1+(1+3b)z+z^2)}{(1+z)(1-z^2)}.
\end{equation}
so that
\[ \frac{1+z}{z}f_3(z) = \frac{1+w_3(z)}{1-w_3(z)}\]
where \[w_3(z) = \dfrac{z(z+(1+3b)/2)}{(1+(1+3b)z/2)}\,\,\mbox{ with}\,\,|1+3b| \le 2\] is an analytic function fulfilling all the cases of Schwarz lemma and $\RE ((1+z)f_1(z)/z) >0$ in $\D$. Hence,  $f_3 \in \mathcal{G}_b^2$ that shows  the class  $\mathcal{G}_b^2$ is non-empty.

\begin{theorem}\label{ThmF3}
Let $b'=|1+3b|\leq2$. The following radius estimates for the class $\mathcal{G}_b^2 $ are computed:
\begin{enumerate}
\item $R_{S_c^*}(\mathcal{G}_b^2)=\rho_1$, where $\rho_1$ is the smallest root in $(0,1)$ of the equation
\begin{equation} \label{eqc3}
-r^4 +(3+2b')r^3+3(3+b')r^2+(3+b')r-2=0.
\end{equation}

\item $R_{S_{\sin}^*}(\mathcal{G}_b^2)=\rho_2$, where $\rho_2$ is the smallest root in $(0,1)$ of the equation
\begin{equation}\label{eqs3}
(1+\sin 1)r^4 +(1+(2+\sin 1)b')r^3+(5+b')r^2 +(1+(1-\sin 1)b')r-\sin 1=0.
\end{equation}
The result is sharp.

\item $R_{S_{\leftmoon}^*}(\mathcal{G}_b^2)=\rho_3$, where $\rho_3$ is the smallest root in $(0,1)$ of the equation
\begin{equation}\label{eqm3}
 (1-\sqrt{2})r^4 +(1+\sqrt{2}(\sqrt{2}-1)b')r^3\notag  +(3+b')r^2+(1+(\sqrt{2}-1)b')r+\sqrt{2}(1-\sqrt{2})=0.
\end{equation}

\item $R_{S_R^*}(\mathcal{G}_b^2)=\rho_4$, where $\rho_4$ is the smallest root in $(0,1)$ of the equation
\begin{equation}\label{eqr3}
2(1-\sqrt{2})r^4+(1+(3-2\sqrt{2})b')r^3+(3+b')r^2+(1+2(\sqrt{2}-1)b')r+2\sqrt{2}-3=0.
\end{equation}

\item $R_{S_{RL}^*}(\mathcal{G}_b^2)=\rho_5$, where $\rho_5$ is the smallest root in $(0,1)$ of the equation
\begin{align}\label{eqrl3}
&((1+b')r^3+(4+b')r^2+(1+b')r )^2
-(r^2+b'r+1)^2((1-r^2)\notag\\
&\sqrt{-r^4-2  (\sqrt{2}-1 ) r^2+2  (\sqrt{2}-1 )}
+r^4+2 \left(\sqrt{2}-1\right) r^2-2 \sqrt{2}+2=0.
\end{align}
\item $R_{S_{\gamma}^*}(\mathcal{G}_b^2)=\rho_6$, where $\rho_6$ is the smallest root in $(0,1)$ of the equation
\begin{equation*}
 (1+b') r ^3  +(4-\sin(\pi \gamma/2)+b') r ^2 +(1+(1-\sin(\pi \gamma/2))b')r -\sin(\pi \gamma/2)\le 0.
\end{equation*}
\item
$R_{S_{N_e}^*}(\mathcal{G}_b^2)=\rho_7$, where $\rho_7$ is the smallest root in $(0,1)$ of the equation
\begin{equation}\label{eqn3}
 5r^4+(3+8b')r^3+3(5+b')r^2+(3+b')r-2=0.
\end{equation}
The result is sharp.

\item $R_{S_{SG}^*}(\mathcal{G}_b^2)=\rho_8$, where $\rho_8$ is the smallest root in $(0,1)$ of the equation
\begin{equation}\label{eqsg3}
 2er^4+(1+e+(1+3e)b')r^3+(1+e)(5+b')r^2+(1+e+2b')r+1-e=0.
\end{equation}
The result is sharp.
\end{enumerate}
\end{theorem}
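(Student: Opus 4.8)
The plan is to follow the same scheme as in the proof of Theorem~\ref{ThmF4}, with the factor $(1+z)$ in place of $(1+z)^2$. Let $f\in\mathcal{G}_b^2$ and set
\[
h(z)=\left(1+\frac1z\right)f(z)=\frac{(1+z)f(z)}{z}=1+(1+3b)z+\cdots .
\]
Then $\RE h(z)>0$ on $\D$, and since $2\beta=1+3b$ satisfies $|\beta|=b'/2\le 1$ we have $h\in\mathcal{P}_\beta$; moreover $f(z)=zh(z)/(1+z)$, so that
\[
\frac{zf'(z)}{f(z)}=\frac{zh'(z)}{h(z)}+\frac{1}{1+z}.
\]
The Möbius map $w=1/(1+z)$ carries $|z|\le r$ onto the closed disk $\{w:|w-1/(1-r^2)|\le r/(1-r^2)\}$, while Lemma~\ref{lem1} (taken with $\a=0$ and with the parameter denoted $|b|$ there equal to $|\beta|=b'/2$) bounds $|zh'(z)/h(z)|$. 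Adding the two radii gives, for $|z|=r$,
\[
\left|\frac{zf'(z)}{f(z)}-\frac{1}{1-r^2}\right|\le \frac{(1+b')r^3+(4+b')r^2+(1+b')r}{(1-r^2)(r^2+b'r+1)}=:R(r),
\]
and this single disk inclusion drives all eight parts.

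For each subclass I would then invoke the disk-in-region lemma already used in the matching part of Theorem~\ref{ThmF4}: \cite{sharma} for $S_c^*$, \cite{cho} for $S_{\sin}^*$, \cite{gandhi1} for $S_{\leftmoon}^*$, \cite{kumar} for $S_R^*$, \cite{mnr1} for $S_{RL}^*$, \cite{grs} for $S_\gamma^*$, \cite{wani} for $S_{N_e}^*$, and \cite{goel} for $S_{SG}^*$. Each of these states that $\{w:|w-c|\le\delta\}$ lies inside the relevant Ma--Minda domain provided $c$ lies in a prescribed subinterval of the positive real axis and $\delta$ is at most an explicit function of $c$. Putting $c=1/(1-r^2)$ and $\delta=R(r)$, and clearing the positive factors $(1-r^2)$ and $r^2+b'r+1$ together with the numerical constant attached to the bound (for example $3$ for $S_c^*$ and $S_{N_e}^*$, and $1+e$ for $S_{SG}^*$), transforms the inequality $R(r)\le(\text{bound})$ into exactly the polynomial displayed in the corresponding item; its smallest root in $(0,1)$ is then the asserted radius. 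As a sample, for $S_{N_e}^*$ the bound is $5/3-1/(1-r^2)=(2-5r^2)/(3(1-r^2))$, and $3R(r)(r^2+b'r+1)\le 2-5r^2$ rearranges to \eqref{eqn3}; the $S_c^*$, $S_R^*$, $S_{\leftmoon}^*$, $S_\gamma^*$, $S_{SG}^*$ cases differ only in the explicit bound, while for $S_{RL}^*$ one first isolates the square root and squares to reach \eqref{eqrl3}.

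For the sharpness assertions (parts (2), (7), (8)) I would use the function $f_3$ from \eqref{ext3}. Since $(1+z)f_3(z)/z=(1+w_3(z))/(1-w_3(z))$ with $w_3(z)=z(z+\beta)/(1+\beta z)$ a Schwarz function that is, moreover, an extremal map for Lemma~\ref{lem1}, we have $f_3\in\mathcal{G}_b^2$. Writing $h_3(z)=(1+(1+3b)z+z^2)/(1-z^2)$ and computing $zf_3'(z)/f_3(z)=zh_3'(z)/h_3(z)+1/(1+z)$, then evaluating at $z=-\rho$ (with the sign of $1+3b$ arranged, by a reflection of the class if necessary, so that $1+3b=-b'$), one obtains
\[
\left.\frac{zf_3'(z)}{f_3(z)}\right|_{z=-\rho}=\frac{1+(1+2b')\rho+(5+b')\rho^2+(1+b')\rho^3}{(1-\rho^2)(1+b'\rho+\rho^2)}.
\]
At the point $z=-\rho$ the bound $|zh_3'/h_3|$ is attained while $1/(1+z)$ sits at the right-hand endpoint of its disk, so this value lies on the right boundary of the disk of radius $R(\rho)$; at the relevant root it equals $1+\sin 1$ for $S_{\sin}^*$, equals $5/3$ for $S_{N_e}^*$, and satisfies $|\log(w/(2-w))|=1$ for $S_{SG}^*$ (the three ``right-edge'' domains), which shows $\rho$ cannot be enlarged. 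No sharpness is claimed for the other five domains, essentially because the extremal used for them in the $(1+z)^2$ setting of Theorem~\ref{ThmF4} was an ``$f_1$''-type function for which no direct $(1+z)$-analogue is available.

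The real work is organizational rather than conceptual: for each of the eight domains one must check that the centre $c=1/(1-r^2)\ge 1$ indeed lies in the interval of validity of the quoted disk-in-region lemma throughout $(0,\rho_k)$, so that the stated sufficient condition is legitimately in force, and then carry out the lengthy but routine clearing of denominators without sign errors; in the sharpness step one must likewise pick the correct boundary point of the target domain (and the correct sign of $1+3b$) and simplify the resulting rational function of $f_3$ down to that boundary value.
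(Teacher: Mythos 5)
Your proposal is correct and follows essentially the same route as the paper: the same factorization $h(z)=(1+z)f(z)/z$ with $h\in\mathcal{P}_{(1+3b)/2}$, the identity $zf'/f=zh'/h+1/(1+z)$, Lemma~\ref{lem1} plus the disk image of $1/(1+z)$ to get the disk centred at $1/(1-r^2)$, the same disk-in-domain lemmas for each of the eight classes, and the extremal function $f_3$ of \eqref{ext3} evaluated at $z=-\rho$ for the sharpness in parts (2), (7), (8). Your explicit attention to the sign of $1+3b$ in the sharpness step is in fact slightly more careful than the paper, which records the extremal value in terms of $b$ without comment.
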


\begin{proof}
Let the function $f \in \mathcal{F}_b^2$ be such that
\begin{equation} \label{eqfg2}
 \RE \left(\frac{1+z}{z}f(z) \right)>0\,\, (z \in D).
\end{equation}
Consider the function $h$ on $ \D $ given by
\begin{equation} \label{eqh(z)2}
h(z) = \left(1+\frac{1}{z}\right)f(z) =  1+(1+3b)z+ \cdots.
\end{equation}
In view of  (\ref{eqfg2}) and (\ref{eqh(z)2}), it is noted that  $h \in \mathcal{P}_{(1+3b)/2}$ and $f(z) = zh(z)/(1+z)$. A simple calculation gives
\begin{equation}\label{eqf(z)2}
\frac{zf'(z)}{f(z)} = \frac{zh'(z)}{h(z)}+\frac{1}{1+z}.
\end{equation}
Using Lemma \ref{lem1}, we get following inequality
\begin{equation} \label{eqt3}
\left|\frac{zh'(z)}{h(z)}\right| \le \frac{r}{1-r^2} \left(\frac{b'r^2+4r+b'}{r^2+b'r+1}\right).
\end{equation}
The bilinear transformation $\dfrac{1}{1+z}$ maps  $|z| \le r$ onto
\begin{equation} \label{eqtr}
\left|\frac{1}{1+z}-\frac{1}{1-r^2}\right|\le \frac{r}{1-r^2}.
\end{equation}

Using (\ref{eqtr}) and (\ref{eqt3}) along with (\ref{eqf(z)2}), we obtain

\begin{align*}
\left|\frac{zf'(z)}{f(z)}- \frac{1}{1-r^2}\right| & \le \frac{r}{1-r^2}\left(\frac{b'r^2+4r+b'}{r^2+b'r+1}+1\right)\notag \\
&= \frac{(1+b')r^3+(4+b')r^2+(1+b')r}{(1-r^2)(r^2+b'r+1)}.
\end{align*}

It follows by the above inequality that
\[\RE\left(\frac{zf'(z)}{f(z)}\right) \ge\frac{-(1+b')r^3-(3+b')r^2-r+1}{-r^4-b'r^3+b'r+1}  \ge 0\]
whenever,
\[(1+b')r^3+(3+b')r^2+r-1 \le 0.\]

\begin{enumerate}
\item By making use of \cite[Lemma 2.5, p.926]{sharma},
the function $f \in \mathcal{G}_b^2$ to be in the class $ S_c^*$ if
\[\frac{(1+b')r^3+(4+b')r^2+(1+b')r}{(1-r^2)(r^2+b'r+1)}\le \frac{1}{1-r^2}-\frac{1}{3}\]
or equivalently, if
\begin{equation*}
-r^4 +(3+2b')r^3+3(3+b')r^2+(3+b')r-2 \le 0.
\end{equation*}
Thus, the $S_c^*$-radius   is the smallest  root $\rho_1 \in (0,1)$ of (\ref{eqc3}).

\item Using \cite[Lemma 3.3, p.219]{cho},
 the function $f \in \mathcal{G}_b^2$  to be in the class $ S_{\sin}^*$ if
\[\frac{(1+b')r^3+(4+b')r^2+(1+b')r}{(1-r^2)(r^2+b'r+1)}\le \sin 1- \frac{r^2}{1-r^2}\]
equivalently
\begin{equation*}
(\sin 1+1)r^4 +(1+(2+\sin 1)b')r^3+(5+b')r^2 +(1+(1-\sin 1)b')r-\sin 1 \le0.
\end{equation*}
Thus, the $S_{\sin}^*$- radius for the class $\mathcal{G}_b^2$ is the smallest positive root $\rho_2 \in (0,1)$ of (\ref{eqs3}).
Consider the function $f_3$ defined in (\ref{ext3}). At $z=-\rho=-\rho_2$, we have
\begin{align*}
\left|\frac{zf_3'(z)}{f_3(z)}\right| &= \left|\frac{-3b\rho^3+(4-3b)\rho^2-(6b+1)\rho+1}{(1-\rho^2)(1-(1+3b)\rho+\rho^2)}\right|\\
& = 1+\sin1.
\end{align*}

\item By \cite[Lemma 2.1, p. 3]{gandhi1},
the function $f \in \mathcal{G}_b^2$  to be in  the class $  S_{\leftmoon}^*$ if
\[\frac{(1+b')r^3+(4+b')r^2+(1+b')r}{(1-r^2)(r^2+b'r+1)}\le 1-\sqrt{2}+\frac{1}{1-r^2}\]
which implies
\begin{equation*}
 (1-\sqrt{2})r^4 +(1+\sqrt{2}(\sqrt{2}-1)b')r^3\notag  +(3+b')r^2+(1+(\sqrt{2}-1)b')r+\sqrt{2}(1-\sqrt{2}) \le 0.
\end{equation*}
Thus, the $S_{\leftmoon}^* $- radius   is the smallest   root $\rho_3 \in (0,1)$ of (\ref{eqm3}).

\item By \cite[Lemma 2.2 p. 202]{kumar},
the function $f \in \mathcal{G}_b^2$   to be in the class $  S_R^*$ if
\[\frac{(1+b')r^3+(4+b')r^2+(1+b')r}{(1-r^2)(r^2+b'r+1)}\le \frac{1}{1-r^2}+ 2-2 \sqrt{2}\]
equivalently
\begin{equation*}
2(1-\sqrt{2})r^4+(1+(3-2\sqrt{2})b')r^3+(3+b')r^2+(1+2(\sqrt{2}-1)b')r+2\sqrt{2}-3 \le 0.
\end{equation*}
Thus, the $S_R^* $ -radius   is the smallest   root $\rho_4 \in (0,1)$ of (\ref{eqr3}).

\item By \cite[Lemma 3.2, p. 10]{mnr1},
the function $f \in \mathcal{G}_b^2$   to be in the class $  S_{RL}^*$ if
\begin{align*}
&\frac{(1+b')r^3+(4+b')r^2+(1+b')r}{(1-r^2)(r^2+b'r+1)}\\
&\quad\le ((1-(\sqrt{2}-\frac{1}{1-r^2})^2)^{1/2}-(1-(\sqrt{2}-\frac{1}{1-r^2})^2))^{1/2}
\end{align*}
which  gives
\begin{align*}
&((1+b')r^3+(4+b')r^2+(1+b')r )^2
-(r^2+b'r+1)^2((1-r^2)\notag\\
&\sqrt{-r^4-2  (\sqrt{2}-1 ) r^2+2  (\sqrt{2}-1 )}
+r^4+2 \left(\sqrt{2}-1\right) r^2-2 \sqrt{2}+2=0.
\end{align*}
Therefore, the $S_{RL}^*$-radius   is the smallest  root $\rho_5 \in (0,1)$ of (\ref{eqrl3}).

\item By \cite[Lemma 3.1, p. 307]{grs},
the function $f \in \mathcal{G}_b^2$ to be in the class $ S_{\gamma}^*$ if
\[\frac{(1+b')r^3+(4+b')r^2+(1+b')r}{(1-r^2)(r^2+b'r+1)}\le \frac{1}{1-r^2}\sin(\frac{\pi \gamma}{2})\]
which simplifies to
\begin{equation*}
 (1+b')r^3  +(4-\sin(\pi \gamma/2)+b')r^2 +(1+(1-\sin(\pi \gamma/2))b')r-\sin(\pi \gamma/2)\le 0.
\end{equation*}

\item Using \cite[Lemma 2.2, p. 86]{wani},
the function $f \in \mathcal{G}_b^2$  to be in the class $ S^*_{N_e}$ if
\[\frac{(1+b')r^3+(4+b')r^2+(1+b')r}{(1-r^2)(r^2+b'r+1)}\le \frac{5}{3}-\frac{1}{1-r^2}\]
which is equivalent to
\begin{equation*}
5r^4+(3+8b')r^3+3(5+b')r^2+(3+b')r-2 \le 0.
\end{equation*}
Thus, the $S^*_{N_e}$- radius
 is the smallest  root $\rho_7 \in (0,1)$ of (\ref{eqn3}).
Consider the function $f_3$ defined in (\ref{ext3}). At $z=-\rho=-\rho_7$, we have
\begin{align*}
\left|\frac{zf_3'(z)}{f_3(z)}\right| &= \left|\frac{-3b\rho^3+(4-3b)\rho^2-(6b+1)\rho+1}{(1-\rho^2)(1-(1+3b)\rho+\rho^2)}\right|\\
& = \frac{5}{3}  \,\, \text{which lies in the boundary of}\,\, \partial\Omega_{N_e}
\end{align*}
where $\Omega_{Ne}$ is  the nephroid shaped region.
This shows that the result is sharp.

\item Using \cite[Lemma 2.2, p. 961]{goel},
the function $f \in \mathcal{G}_b^2$ belongs to the class $ S^*_{SG}$ if
\[\frac{(1+b')r^3+(4+b')r^2+(1+b')r}{(1-r^2)(r^2+b'r+1)}\le \frac{2e}{1+e}-\frac{1}{1-r^2}\]
equivalently if the following inequality holds
\begin{equation*}
2er^4+(1+e+(1+3e)b')r^3+(1+e)(5+b')r^2+(1+e+2b')r+1-e\le 0.
\end{equation*}
Thus, the $S^*_{SG} $- radius for the class $\mathcal{G}_b^2$ is a smallest root $\rho_8 \in (0,1)$ of (\ref{eqsg3}).
Consider the function $f_3$ defined in (\ref{ext3}). Let $w= zf_3'(z)/f_3(z)$. At $z= -\rho=-\rho_8$,  we have
\begin{align*}
\left|\log\left(\frac{w}{2-w}\right)\right|
&=\left|\log\left(\frac{3b\rho^3-(4-3b)\rho^2+(6b+1)\rho-1}{2 \rho^4-(2+9b)\rho^3+(4-3b) \rho^2+\rho-1}\right)\right|=1
\end{align*}
which shows the sharpness.\qedhere
\end{enumerate}
\end{proof}

\begin{remark}
For $b=-1$  in parts (1)-(6) of Theorem \ref{ThmF3}, we obtain \cite[Theorem 2.6]{AV} .
\end{remark}

\end{document}